\theoremstyle{plain}
\newtheorem{theorem}{Theorem}[section]
\newtheorem{cor}{Corollary}[section]
\theoremstyle{definition}
\newtheorem{definition}{Definition}[section]
\theoremstyle{remark}
\numberwithin{equation}{section}
\newcommand{\real}{{\mathbb R}}
\newcommand{\comp}{{\mathbb C}}
\newcommand{\G}{{\mathbb G}}
\newcommand{\Lie}{{\mathcal G}}
\newcommand{\T}{{\mathcal T}}
\newcommand{\To}{{\mathbb T}}
\newcommand{\Ho}{{\mathcal H}}
\newcommand{\rexp}{\overrightarrow{\exp}}
\newcommand{\lexp}{\overleftarrow{\exp}}
\DeclareMathOperator{\hexp}{exp}
 \DeclareMathOperator{\Ad}{Ad}
 \DeclareMathOperator{\ad}{ad}
 \DeclareMathOperator{\Aut}{Aut}
 \DeclareMathOperator{\End}{End} 
 \DeclareMathOperator{\tr}{trace}
  \DeclareMathOperator{\SU}{SU}
  \DeclareMathOperator{\GL}{GL}
\begin{document}
\title[Cartan exponential]{The Cartan exponential map in semi-simple, compact Lie groups}
\author{Andr\'{a}s Domokos}
\address{Department of Mathematics and Statistics,
California State University Sacramento, 6000 J Street, Sacramento, CA, 95819, USA}
\email{domokos@csus.edu}

\date{\today}

\keywords{semi-simple, compact Lie groups, exponential map, sub-Riemannian geometry}

\subjclass[2010]{53C22, 22C05}

\begin{abstract}
In this paper we introduce a new type of exponential map in semi-simple compact Lie groups, which is related to the sub-Riemannian geometry generated by the orthogonal complement of a Cartan subalgebra in a similar way to how the group exponential map is related to the Riemannian geometry. \end{abstract}

\maketitle
\section{Introduction}
This paper is the result of our efforts to understand the role of the chronological exponential \cite{agra2} in the geometry of Lie groups, in general, and of semi-simple, compact Lie groups, in particular. We will try to find connections similar to those between the group exponential map and the Riemannian geometry of the Lie group.\\

For the beginning, let  $\G$ be a matrix Lie group, which is a closed subgroup of $\GL (n , \comp )$ and $\Lie$ be its Lie algebra equipped with an inner product which will be specified later.

The group exponential map will be denoted  by $\exp : \Lie  \to \G$, where $\exp(X) = e^X$ is the usual matrix exponential.

We identify $\Lie$ with the Lie algebra of left-invariant vector fields and consider the tangent space at $g \in \G$ to be $T_g \G = g \cdot \Lie$. 

 For a function $X : \real \to \Lie$, consider the non-autonomous dynamical system
 \begin{equation}\label{dyn}
 \left\{ \begin{array}{l} \gamma' (t) = \gamma (t) \cdot X(t)\\ \gamma (0) = I \, . \end{array} \right.
 \end{equation}

Regarding the existence and uniqueness of solutions to \eqref{dyn}, which is a system of linear differential equations, we have the following result.

\begin{theorem}\label{thmexistence}
If $X : \real \to \Lie$ is continuous, then \eqref{dyn} has a unique solution $\gamma : \real \to \G$.
Moreover, if $X$ is smooth, then $\gamma$ is smooth.
\end{theorem}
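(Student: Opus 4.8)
The plan is to solve the equation first in the ambient algebra $M_n(\comp)$ of all $n \times n$ complex matrices, and only afterwards to argue that the solution is forced to remain inside the subgroup $\G$. Since the right-hand side $(\gamma, t) \mapsto \gamma \, X(t)$ is linear in $\gamma$ and continuous in $t$, the standard Picard--Carath\'eodory theory for linear systems applies. Concretely, I would pass to the integral form
\begin{equation*}
\gamma(t) = I + \int_0^t \gamma(s)\, X(s)\, ds
\end{equation*}
and run the usual Picard iteration; convergence on every compact interval is supplied by a Gr\"onwall estimate, and the same linearity rules out finite-time blow-up, so the solution is defined on all of $\real$. This produces a unique continuous $\gamma : \real \to M_n(\comp)$, and uniqueness here already gives uniqueness among $\G$-valued solutions, since any such is a fortiori an $M_n(\comp)$-valued one.

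Next I would check that $\gamma(t)$ is invertible, so that at least $\gamma(t) \in \GL(n, \comp)$. Liouville's (Abel--Jacobi) formula, which holds for any solution without presupposing invertibility, gives $(\det \gamma)'(t) = \det \gamma(t) \cdot \tr X(t)$, whence, using $\det \gamma(0) = 1$,
\begin{equation*}
\det \gamma(t) = \exp \left( \int_0^t \tr X(s)\, ds \right) \neq 0 .
\end{equation*}

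The crux is the third step: upgrading $\gamma(t) \in \GL(n,\comp)$ to $\gamma(t) \in \G$. Here I would finally exploit the two hypotheses not yet used, namely that $X(t) \in \Lie$ for every $t$ and that $\G$ is \emph{closed}. The idea is to approximate $X$ uniformly on a compact interval $[0,T]$ by piecewise-constant functions $X_k$ still valued in $\Lie$. For a piecewise-constant coefficient the solution is an explicit finite product of matrix exponentials: on a subinterval where $X_k \equiv A \in \Lie$ the solution advances by right multiplication with $e^{sA} = \exp(sA)$, which lies in $\G$ because $\exp : \Lie \to \G$; as $\G$ is a group, the whole product stays in $\G$, so each approximate solution $\gamma_k$ is $\G$-valued. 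By the continuous dependence of solutions of a linear system on their coefficients (again a Gr\"onwall argument), $\gamma_k \to \gamma$ uniformly on $[0,T]$, and since $\G$ is closed the limit satisfies $\gamma(t) \in \G$. I expect this reconciliation of the ambient linear-ODE solution with the subgroup structure to be the main obstacle, and the pairing of piecewise-constant approximation with the closedness of $\G$ is the device I would rely on to clear it.

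Finally, the smoothness assertion follows by bootstrapping directly from \eqref{dyn}. If $X$ is smooth then $\gamma' = \gamma X$ is continuous, so $\gamma \in C^1$; but then $\gamma X$ is itself $C^1$, forcing $\gamma \in C^2$, and iterating this argument yields $\gamma \in C^\infty$.
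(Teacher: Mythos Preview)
Your argument is correct. The paper, however, does not actually prove Theorem~\ref{thmexistence}: it simply remarks that \eqref{dyn} is a linear system and then states the result, deferring to the chronological calculus of \cite{agra2}. In particular, the paper never addresses the point you rightly single out as the crux, namely why the $M_n(\comp)$-valued solution is forced to remain in the closed subgroup $\G$.

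Your piecewise-constant approximation together with closedness of $\G$ is a clean way to settle this; an alternative one sometimes sees is to note that $\gamma(t)^{-1}\gamma'(t)=X(t)\in\Lie$ means the curve is everywhere tangent to the left-invariant distribution $g\cdot\Lie$, hence stays in the integral submanifold $\G$ through $I$. Either route works; yours has the virtue of using only the closed-subgroup hypothesis and elementary ODE estimates. The Liouville step is correct but redundant once you have $\gamma(t)\in\G\subset\GL(n,\comp)$. The bootstrapping for smoothness is standard and fine.
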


The solution $\gamma$, which is called the flow of $X$, is a special case of the right chronological exponential flow \cite{agra2}. Therefore, we will adopt the notation from \cite {agra2}:
$$\gamma (t)  = \rexp \int_0^t X(s) ds \, .$$

It is worth mentioning that $\rexp \int_0^t X(s) ds = e^{\int_0^t X(s) ds}$ if and only if $[X(s) , X(t) ] =0$, for all $s,t \in \real$. In particular, if $X(t) \equiv X \in \Lie$, then 
$\rexp \int_0^t X(s) ds = e^{t X}$.

If we rewrite \eqref{dyn} as
$$\gamma (t) = I + \int_0^t \gamma (s) \cdot X(s) ds \, ,$$
and iterate it, we get the formal series expansion of the right chronological exponential flow
\begin{equation}\label{series}
\rexp \int_0^t X(s) ds = I + \sum_{k=1}^{\infty} \int_{\Delta_k (t)} X(s_k) \cdots   X(s_1 ) ds_k \dots ds_1 \, ,
\end{equation}
where
$$\Delta_k (t) = \left\{ (s_1, \dots s_k ) \in \real^k \, : \, 0 \leq s_k \leq \dots \leq s_1 \leq t \right\} \, .$$
By the fact that
$$ \left\| \int_{\Delta_k (t)} X(s_k) \cdots   X(s_1 ) ds_k \dots ds_1 \right\| \leq \frac{1}{k!} \left( \int_0^t ||X(s) || ds \right)^k ,$$ 
it follows that the series \eqref{series} converges pointwise under relatively mild conditions, like $||X(t)|| \leq M t^p$.\\

The inverse flow $\gamma^{-1} (t)$ is defined by the equation $\gamma^{-1} (t) \cdot \gamma (t) = I$. By differentiating this equation, we get that the inverse flow is a solution of the dynamical system
 \begin{equation}\label{dyninv}
 \left\{ \begin{array}{l} \left( \gamma^{-1} \right)' (t) = -X(t) \cdot \gamma^{-1} (t)\\ \gamma^{-1} (0) = I \, . \end{array} \right.
 \end{equation}
Hence, for the notation of the inverse flow we use the left chronological exponential notation
$$\gamma^{-1} (t) = \lexp \int_0^t \hspace{-0.2cm} -X(s) ds \, .$$ 
The flow starting at time $t_0$ can be defined as
$$\rexp \int_{t_0}^t X(s) ds = \lexp \int_0^{t_0}\hspace{-0.2cm} -X(s) ds \cdot \rexp \int_0^t X(s) ds \, ,$$ 
and hence we have the natural equation regarding the composition of flows
$$\rexp \int_{t_0}^{t_1} X(s) ds \cdot \rexp \int_{t_1}^{t_2} X(s) ds =  
\rexp \int_{t_0}^{t_2} X(s) ds \, .$$

For the following part of this paper we need the adjoint representation of $\G$,
$$\Ad \colon \G \to \Aut (\Lie ) \, , \; \; \Ad (g) (X) = g X g^{-1} \, ,$$
and its differential at the identity, which is the adjoint representation of $\Lie$,
$$\ad \colon \Lie \to \End (\Lie) \, , \; \; \ad X (Y) = [X,Y] \, .$$ 

Regarding the flow of $X(t)+Y(t)$ we have the following result, which is called the Variations Formula in \cite{agra2}. For the clarity of the presentation of our ideas, we provide a proof.

\begin{theorem}\label{thmsumflow}
Let $X, Y : \real \to \Lie$ be continuous. Then
\begin{multline}\label{eqsumflow}
\rexp \int_0^t X(s) +Y(s)ds \\ = \rexp \int_0^t \Ad \left( \rexp \int_0^s X(s_1) ds_1 \right) (Y(s)) ds \cdot \rexp \int_0^t X(s) ds \, .
\end{multline}
\end{theorem}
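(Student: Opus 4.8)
The plan is to denote the left-hand side of \eqref{eqsumflow} by $\gamma$, to construct the claimed right-hand side as a product of two flows that we already understand, and then to show that this product solves the \emph{same} dynamical system \eqref{dyn} as $\gamma$; uniqueness from Theorem \ref{thmexistence} will then force the two to coincide. Concretely, I would set $P(t) = \rexp \int_0^t X(s)\,ds$, so that $P$ solves $P'(t) = P(t) X(t)$ with $P(0)=I$, and define the continuous curve $Z(s) = \Ad \left( P(s) \right)(Y(s)) = P(s) Y(s) P(s)^{-1}$. Letting $Q(t) = \rexp \int_0^t Z(s)\,ds$ be the associated flow, so that $Q'(t) = Q(t) Z(t)$ and $Q(0)=I$, the assertion \eqref{eqsumflow} becomes exactly the factorization $\gamma(t) = Q(t) P(t)$.

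To verify this, set $\widetilde{\gamma}(t) = Q(t) P(t)$. Then $\widetilde{\gamma}(0) = I$, and differentiating the product gives
\begin{equation*}
\widetilde{\gamma}'(t) = Q'(t) P(t) + Q(t) P'(t) = Q(t) Z(t) P(t) + Q(t) P(t) X(t) \, .
\end{equation*}
The key step is the cancellation
\begin{equation*}
Z(t) P(t) = P(t) Y(t) P(t)^{-1} P(t) = P(t) Y(t) \, ,
\end{equation*}
which is precisely the reason for conjugating $Y$ by $P$ in the definition of $Z$. Substituting this in yields
\begin{equation*}
\widetilde{\gamma}'(t) = Q(t) P(t) Y(t) + Q(t) P(t) X(t) = \widetilde{\gamma}(t) \bigl( X(t) + Y(t) \bigr) \, .
\end{equation*}
Thus $\widetilde{\gamma}$ satisfies \eqref{dyn} with right-hand side $X(t)+Y(t)$ and initial value $I$, the same system that defines $\gamma$, and the uniqueness part of Theorem \ref{thmexistence} gives $\gamma = \widetilde{\gamma}$, which is \eqref{eqsumflow}.

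The computation itself is a single application of the Leibniz rule, so I do not expect it to be the difficulty. The one point genuinely requiring care, and the main obstacle, is the justification that $Q$ is a bona fide flow in $\G$: one must check that $Z$ is an admissible input for Theorem \ref{thmexistence}, i.e. that $s \mapsto Z(s)$ is a \emph{continuous} curve taking values in $\Lie$. Continuity follows from the continuity of $Y$ together with the fact that $P$ is the $C^1$ solution furnished by Theorem \ref{thmexistence}, while the membership $Z(s) \in \Lie$ rests on $\Ad \left( P(s) \right)$ mapping $\Lie$ into itself (the adjoint action of $\G$ preserves $\Lie$). Once these two facts are recorded, everything reduces to the one-line differentiation above and the uniqueness theorem.
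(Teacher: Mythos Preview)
Your proof is correct and follows essentially the same route as the paper: both arguments differentiate the product of the two flows and use the conjugation identity $\Ad(P(t))(Y(t))\,P(t) = P(t)\,Y(t)$ to recover the ODE for $X+Y$. The only cosmetic difference is that the paper assumes a factorization $F = \Psi\,\Phi$ and \emph{derives} the ODE that $\Psi$ must satisfy, whereas you define $Q$ up front and \emph{verify} that $QP$ solves the correct equation, invoking uniqueness explicitly; your attention to the continuity and $\Lie$-valuedness of $Z$ is a welcome bit of rigor that the paper leaves implicit.
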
 

\begin{proof}
Let us use the notations
$$F(t) = \rexp \int_0^t X(s)+Y(s) ds \, , \; \Phi (t) =  \rexp \int_0^t X(s) ds \, .$$
We want to find a flow $\Psi (t)$ such that
$$F(t) = \Psi (t) \cdot \Phi (t) \, .$$
By differentiation, we get that
$$F'(t) = \Psi'(t) \cdot \Phi (t) + \Psi (t) \cdot \Phi'(t) = F(t) \cdot (X(t) +Y(t)) \, .$$
Hence,
$$\Psi'(t) \cdot \Phi(t) + \Psi(t) \cdot \Phi(t) \cdot X(t) = \Psi(t) \cdot \Phi(t) \cdot X(t)+ \Psi (t) \cdot \Phi (t) \cdot Y(t) \, ,$$ 
which leads to
$$\Psi'(t) = \Psi(t) \cdot \left( \Phi(t) \cdot Y(t) \cdot \Phi^{-1} (t) \right) \, .$$
Therefore,
$$\Psi(t) = \rexp \int_0^t \Ad (\Phi(s))(Y(s) ds \, ,$$
from which \eqref{eqsumflow} follows.
\end{proof}

By considering two constant vector fields $X$ and $Y$, formula \eqref{eqsumflow} immediately implies the following corollary.
 
\begin{cor}\label{corsumflow}
If $X, Y \in \Lie$, then
\begin{equation}\label{eqsumflow1}
e^{t(X+Y)} \cdot e^{-tX}= \rexp \int_0^t e^{sX} \cdot Y \cdot e^{-sX} ds  \, .
\end{equation}
\end{cor}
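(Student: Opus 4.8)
The plan is to specialize the Variations Formula of Theorem~\ref{thmsumflow} to constant vector fields and then rearrange a single factor. First I would set $X(s) \equiv X$ and $Y(s) \equiv Y$ in \eqref{eqsumflow}. The crucial simplification comes from the identity recorded earlier in the paper: whenever the integrand is a fixed element of $\Lie$, the right chronological exponential collapses to the ordinary matrix exponential, i.e. $\rexp \int_0^t X \, ds = e^{tX}$. Applying this, the left-hand side $\rexp \int_0^t (X+Y)\,ds$ becomes $e^{t(X+Y)}$ — here one must note that $X+Y$ is itself a constant element of $\Lie$ — and the rightmost factor $\rexp \int_0^t X(s)\,ds$ becomes $e^{tX}$.

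Next I would evaluate the adjoint factor. With $X(s_1) \equiv X$ we again have $\rexp \int_0^s X(s_1)\,ds_1 = e^{sX}$, so by the definition $\Ad(g)(Y) = g Y g^{-1}$ the integrand of the leading flow becomes $\Ad(e^{sX})(Y) = e^{sX}\cdot Y \cdot e^{-sX}$. Substituting these three simplifications into \eqref{eqsumflow} yields
$$e^{t(X+Y)} = \rexp \int_0^t e^{sX}\cdot Y \cdot e^{-sX}\,ds \cdot e^{tX} \, .$$
Finally, multiplying both sides on the right by $e^{-tX}$, which is the inverse of $e^{tX}$, isolates the desired factor and produces exactly \eqref{eqsumflow1}.

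I do not anticipate any genuine obstacle, since the argument is a direct substitution into an already-established identity. The only points requiring a moment's care are purely matters of bookkeeping: confirming that the sum $X+Y$ qualifies as a constant vector field so that the left-hand exponential is legitimately $e^{t(X+Y)}$, and verifying that the integrand $e^{sX}\cdot Y \cdot e^{-sX}$ is continuous in $s$ so that the right chronological exponential on the right-hand side is well defined in the sense of Theorem~\ref{thmexistence} (which follows at once from the continuity of the matrix exponential and of matrix multiplication).
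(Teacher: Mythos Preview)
Your proposal is correct and follows precisely the paper's own route: the paper simply states that specializing the Variations Formula \eqref{eqsumflow} to constant vector fields $X$ and $Y$ immediately yields the corollary. Your write-up spells out the substitutions and the final right-multiplication by $e^{-tX}$ more explicitly than the paper does, but the argument is identical.
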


As an interesting fact, by considering $X+Y =Z$ and $W =-X$ in \eqref{eqsumflow1}, we have the following form of the Baker-Campbell-Hausdorff formula:

\begin{cor}
If $Z, W \in \Lie$, then
\begin{equation}\label{BCH}
e^{Z} \cdot e^{W}= \rexp \int_0^1  e^{-sW} \cdot (Z +W) \cdot e^{sW}  ds  \, .
\end{equation}
\end{cor}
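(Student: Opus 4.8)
The plan is to obtain \eqref{BCH} as a direct change of variables in Corollary \ref{corsumflow}, exactly as the remark introducing the statement suggests. Since \eqref{eqsumflow1} holds for \emph{all} $X, Y \in \Lie$ and for every $t$, I would first specialize it to $t = 1$, giving
$$e^{X+Y} \cdot e^{-X} = \rexp \int_0^1 e^{sX} \cdot Y \cdot e^{-sX} \, ds \, .$$

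Next I would introduce the new variables $Z = X + Y$ and $W = -X$. Solving for the old variables yields $X = -W$ and $Y = Z - X = Z + W$, so that $(X,Y) \mapsto (Z,W)$ is a linear bijection of $\Lie \times \Lie$ onto itself; in particular, as $(X,Y)$ ranges over all of $\Lie \times \Lie$, so does $(Z,W)$, and no generality is lost. Substituting $X = -W$ and $Y = Z + W$ into the specialized identity, the left-hand side becomes $e^{X+Y} \cdot e^{-X} = e^{Z} \cdot e^{W}$, while inside the right-hand integrand we have $e^{sX} = e^{-sW}$, $Y = Z + W$, and $e^{-sX} = e^{sW}$, which reassembles precisely into \eqref{BCH}.

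The proof is therefore entirely mechanical, and the only point that warrants any care — the ``obstacle,'' such as it is — is confirming that the substitution is a genuine invertible change of variables, so that the resulting identity is valid for arbitrary $Z, W \in \Lie$ rather than merely for some constrained range inherited from $X, Y$. Once that is noted, everything reduces to rewriting the four factors of \eqref{eqsumflow1}, and the Baker-Campbell-Hausdorff form follows immediately.
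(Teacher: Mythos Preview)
Your proposal is correct and follows exactly the paper's own derivation: the paper obtains \eqref{BCH} by setting $X+Y=Z$ and $W=-X$ in \eqref{eqsumflow1} (implicitly at $t=1$), which is precisely the substitution you describe. Your added remark that the change of variables is a bijection of $\Lie \times \Lie$ is a fine sanity check, though the paper leaves it tacit.
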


Moreover, we can use \eqref{eqsumflow1} and \eqref{series} to find the differential of $\exp$ at $X$ in the direction of $Y$. For a different proof, see \cite[Theorem 1.5.3]{dui}.

\begin{cor}\label{cordifferential}
\begin{equation}\label{eqdifferential}
D(\exp)_X (Y) = \int_0^1 \Ad \left( e^{sX} \right) (Y) \, ds \cdot e^X \, .
\end{equation}
\end{cor}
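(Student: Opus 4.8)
The plan is to identify the differential with a directional derivative and then feed the flow formula from Corollary~\ref{corsumflow} into the series expansion \eqref{series}. By definition,
\[
D(\exp)_X(Y) = \left. \frac{d}{d\ep} \, e^{X + \ep Y} \right|_{\ep = 0}.
\]
The key observation is that \eqref{eqsumflow1}, with $Y$ replaced by $\ep Y$ and evaluated at $t = 1$, yields a closed expression for $e^{X + \ep Y}$. Since $e^{sX} \cdot \ep Y \cdot e^{-sX} = \ep \, \Ad(e^{sX})(Y)$, rearranging that identity gives
\[
e^{X + \ep Y} = \rexp \int_0^1 \ep \, \Ad \left( e^{sX} \right)(Y) \, ds \cdot e^X,
\]
so the entire $\ep$-dependence is concentrated in the right chronological exponential, while the factor $e^X$ is constant in $\ep$.

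The next step is to expand the right chronological exponential using \eqref{series} with $X(s) = \ep \, \Ad(e^{sX})(Y)$. Because the integrand is linear in $\ep$, the $k$-th term of the series carries a factor $\ep^k$, so that
\[
\rexp \int_0^1 \ep \, \Ad \left( e^{sX} \right)(Y) \, ds = I + \sum_{k=1}^{\infty} \ep^k \int_{\Delta_k(1)} \Ad \left( e^{s_k X} \right)(Y) \cdots \Ad \left( e^{s_1 X} \right)(Y) \, ds_k \cdots ds_1.
\]
This is a power series in $\ep$, and differentiating at $\ep = 0$ annihilates the constant term $I$ together with every term of order $\ep^2$ or higher, leaving only the coefficient of $\ep^1$, namely $\int_0^1 \Ad(e^{sX})(Y) \, ds$ (here $\Delta_1(1) = [0,1]$). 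Multiplying by the constant factor $e^X$ then produces exactly the right-hand side of \eqref{eqdifferential}.

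The one point that requires care — and the only genuine obstacle — is the justification of the term-by-term differentiation, i.e.\ the interchange of $\frac{d}{d\ep}$ with the infinite sum. This is where the factorial bound quoted after \eqref{series} does the work: since the map $s \mapsto \Ad(e^{sX})(Y)$ is continuous, its norm is bounded on the compact interval $[0,1]$ by some constant $C$, and hence the $k$-th coefficient is bounded in norm by $C^k / k!$. The series above is therefore an everywhere-convergent power series in $\ep$ with infinite radius of convergence, so it defines an analytic function of $\ep$ whose derivative at $\ep = 0$ is simply its linear coefficient. This legitimizes extracting the $k = 1$ term and completes the argument.
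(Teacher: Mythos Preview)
Your proof is correct and follows essentially the same route as the paper: both apply \eqref{eqsumflow1} with $\ep Y$ in place of $Y$ at $t=1$, expand the resulting chronological exponential via \eqref{series}, and read off the linear-in-$\ep$ term. The only difference is cosmetic---the paper phrases the last step as a limit of a difference quotient with an $O(\ep^2)$ remainder, while you phrase it as term-by-term differentiation of the power series and supply the factorial bound explicitly.
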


\begin{proof}
\begin{equation*}
\begin{split}
D(\exp)_X (Y) & = \lim_{\varepsilon \to 0} \, \frac{e^{X+\varepsilon Y} - e^X}{\varepsilon}\\
&= \lim_{\varepsilon \to 0} \, \frac{\left( e^{X+\varepsilon Y} \cdot e^{-X} - I \right) \cdot e^X}{\varepsilon}\\
& = \lim_{\varepsilon \to 0} \, \frac{1}{\varepsilon} \left( \rexp \int_0^1 e^{sX} \cdot (\varepsilon Y) \cdot e^{-sX} \, ds  - I \right) \cdot e^X \\
& = \lim_{\varepsilon \to 0} \, \frac{1}{\varepsilon} \left( \varepsilon \int_0^1 e^{sX} \cdot Y \cdot e^{-sX} \, ds  + O(\varepsilon^2 ) \right) \cdot e^X \\
& = \int_0^1 \Ad \left( e^{sX} \right) (Y) \, ds \cdot e^X \, .
\end{split}
\end{equation*}
\end{proof}

\section{The Cartan exponential on semi-simple, compact Lie groups}
In this section $\G$ is a semi-simple, compact, connected Lie group.
The Killing form  
$$K(X,Y) = \tr (\ad X \cdot \ad Y ) \, ,$$
is negative definite and non-degenerate on the Lie algebra of a semi-simple, compact Lie group, and hence we can define an inner product on $\Lie$ as
\begin{equation}\label{eq:InnerProduct}
\langle X , Y \rangle = - \rho \, K(X,Y) \, ,
\end{equation}
where $\rho> 0$ is a constant, which can be adjusted according to our normalization preferences. 

For example, in $\SU (3)$ we have  $K(X,Y) = 6 \tr (X  \cdot Y)$, and the Gell-Mann matrices form an orthonormal basis of $\Lie$ with the choice of $\rho = \frac{1}{12}$, which gives $\langle X, Y \rangle = - \frac{1}{2} \tr (X \cdot Y)$.

The Killing form is $\Ad$-invariant, so $\Ad (g)$ is a unitary linear transformation of $\Lie$ for all $g \in \mathbb G$ and $\ad X$ is skew-symmetric for all $X \in \Lie$.

The inner product \eqref{eq:InnerProduct} generates a bi-invariant metric on $\G$, which implies the following properties of the Riemannian geometry on $\G$.

\begin{theorem} \cite[Section3.3]{arv}~\\
(a) All Riemannian geodesics through the identity of $\G$ are given by 
$$ \gamma_X (t) = e^{tX} , \; \; \text{where} \; \;  X \in \Lie .$$
Moreover, $\gamma_X$ has constant speed equal to $||X||$ and constant curvature of $\frac{1}{||X||^2} ||X^2||$.\\ 
(b) The differential of $\exp : \Lie \to \G$ at $0$ is the identity map of $\Lie$, which implies that $\exp$ is a local diffeomorphism at $0$. 
\end{theorem}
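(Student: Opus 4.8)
The plan is to use the bi-invariance of the metric generated by \eqref{eq:InnerProduct} to determine the Levi--Civita connection $\nabla$ on left-invariant vector fields, from which part (a) follows at once, while part (b) is a direct specialization of Corollary \ref{cordifferential}. The single ingredient I expect to carry the weight of the argument is the identity $\nabla_X Y = \tfrac{1}{2}[X,Y]$ for $X,Y \in \Lie$.

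To obtain it I would start from the Koszul formula. Since the inner products of left-invariant fields are constant on $\G$, the derivative terms drop out and one is left with
$$2 \langle \nabla_X Y , Z \rangle = \langle [X,Y],Z \rangle - \langle [Y,Z],X \rangle + \langle [Z,X],Y \rangle .$$
Feeding in the skew-symmetry of $\ad X$ recorded above, equivalently the $\Ad$-invariance of the Killing form, turns each of the last two terms into $\langle [X,Y],Z \rangle$, so that $2\langle \nabla_X Y,Z\rangle = \langle [X,Y],Z\rangle$ for every $Z$ and hence $\nabla_X Y = \tfrac{1}{2}[X,Y]$. In particular $\nabla_X X = 0$.

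With this in hand part (a) is short. The velocity of $\gamma_X(t)=e^{tX}$ is the restriction to $\gamma_X$ of the left-invariant field $X$, so $\nabla_{\gamma_X'}\gamma_X' = (\nabla_X X)\circ \gamma_X = 0$ and $\gamma_X$ is a geodesic; since $X \mapsto \gamma_X$ produces the geodesic with $\gamma(0)=I$ and $\gamma'(0)=X$, uniqueness of geodesics shows that these exhaust the geodesics through the identity. Left-invariance of the metric gives the constant speed $||\gamma_X'(t)|| = ||X||$. For the curvature I would read $\gamma_X$ as a curve in the ambient matrix space: assuming, as we may for a compact group, that $\G \subseteq \U(n)$, the matrix $e^{tX}$ is unitary, so left multiplication by it is an isometry for any ambient Hermitian inner product restricting to \eqref{eq:InnerProduct} on $\Lie$; then $\gamma_X''(t)=e^{tX}X^2$ is orthogonal to $\gamma_X'$, because the speed is constant, and has norm $||X^2||$, so the curvature equals $||X^2||/||X||^2$ (a ratio independent of the chosen scaling).

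Finally, part (b) follows by setting $X=0$ in \eqref{eqdifferential}: since $\Ad(e^{0})=\Id$, the integral collapses to $\int_0^1 Y\,ds = Y$ and the trailing factor $e^0$ is the identity, so $D(\exp)_0 = \Id$, and the inverse function theorem makes $\exp$ a local diffeomorphism at $0$. The main obstacle is confined to the connection computation --- getting the Koszul bookkeeping right and invoking skew-symmetry of $\ad$ in the correct two places; beyond that, the only delicate point is fixing the ambient inner product for the curvature and checking that it is preserved by left translation.
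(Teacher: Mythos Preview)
The paper does not prove this theorem; it is quoted from \cite[Section~3.3]{arv} and stated without argument. So there is no in-paper proof to compare against, and your proposal has to stand on its own merits.

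Your plan is the standard one and it works. The Koszul computation giving $\nabla_X Y=\tfrac12[X,Y]$ for a bi-invariant metric is correct as you outline it, and $\nabla_X X=0$ then yields the geodesic claim and the constant speed immediately. Deriving part~(b) by specializing Corollary~\ref{cordifferential} to $X=0$ is clean and ties the statement back to the paper's own machinery, even though the paper itself does not make that link explicit.

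The one soft spot is the curvature computation. Since $X^2$ need not lie in $\Lie$, the quantity $\|X^2\|$ is an ambient matrix norm, and your parenthetical that the ratio is ``independent of the chosen scaling'' is not quite right: rescaling the ambient inner product by $c$ multiplies $\|X^2\|/\|X\|^2$ by $c^{-1/2}$. One really must fix the ambient Hilbert--Schmidt form so that it restricts to \eqref{eq:InnerProduct} on $\Lie$; this is unambiguous for simple $\G$ but needs a word in the genuinely semi-simple case. That said, the paper is equally informal on exactly this point (compare its later computation of $\|H^2-[H,T]\|$ in the sub-Riemannian theorem), so your treatment is at the same level of rigor as the surrounding text.
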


Let $\To$ be a maximal torus in $\G$ and $\T$ be its Lie algebra. In this case, $\T$ is a maximal commutative subalgebra of $\Lie$, called the Cartan subalgebra. Its dimension is called the rank of $\Lie$, and also the rank of $\G$. The orthogonal complement of $\T$ with respect to the inner product \eqref{eq:InnerProduct} is denoted by $\Ho$ and we call it the horizontal subspace of $\Lie$. The left translates $g \cdot \Ho$ of $\Ho$ form the horizontal distribution of a sub-Riemannian geometry on $\G$, which naturally combines the algebraical and topological properties of $\G$ \cite{mont}.\\


For the remainder of the paper, let us fix a maximal torus $\To$ and its Cartan subalgebra $\T$. \\

\begin{definition}
(a)  For a given $X = H+T \in \Ho \oplus \T$, the Cartan exponential flow $\hexp_{\T}(tX)$ is defined by
\begin{equation}
\hexp_{\T} (tX)= \rexp \int_0^t e^{sT} \cdot H \cdot e^{-sT} ds \, ,
\end{equation} 
or, equivalently by \eqref{BCH},
\begin{equation}
\hexp_{\T} (tX) = e^{tX} \cdot e^{-tT}  \, .
\end{equation} 
(b) The Cartan exponential map $\hexp_{\T} : \Lie \to \G$  defined by
\begin{equation}
\hexp_{\T} (X) = \rexp \int_0^1 e^{sT} \cdot H \cdot e^{-sT} ds \, , \; \text{if} \; X = H+T \in \Ho \oplus \T \, .
\end{equation} 
or, equivalently,
\begin{equation}
\hexp_{\T} (X) = e^{X} \cdot e^{-T} \, , \; \text{if} \; X = H+T \in \Ho \oplus \T \, .
\end{equation} 

\end{definition}

Regarding the properties of the Cartan exponential, we have the following theorem.\\

\begin{theorem}~\\
(a) All sub-Riemannian geodesics through the identity of $\G$ are given by
$\sigma_X (t) = \hexp_{\T} (tX)$, where $X \in \Lie.$\\
Moreover, $\sigma_X$ has a constant speed $||H||$ and a constant curvature
$$k = \frac{\left\Vert H^2 - [H,T] \right\Vert}{\| H \|^2} .$$
(b) The differential of the Cartan exponential map $\hexp_{\T} : \Lie \to \G$ at $0$ is
the orthogonal projection onto $\Ho$:
$$\left( D \hexp_{\T} \right)_0 (X) = H , \; \; \text{where} \; \; X = H+T \in \Ho \oplus \T .$$
\end{theorem}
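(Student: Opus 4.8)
The plan is to dispose of part (b) by direct differentiation and then to build part (a) on the flow identities already proved. For (b) I would use the closed form $\hexp_{\T}(\ep X) = e^{\ep X}\cdot e^{-\ep T}$, which is legitimate because $\ep X = \ep H + \ep T$ has horizontal part $\ep H$ and Cartan part $\ep T$. Since $\hexp_{\T}(0) = I$, the differential at $0$ is the velocity at $\ep = 0$ of the curve $\ep \mapsto e^{\ep X}e^{-\ep T}$; the product rule gives $X\cdot I + I\cdot(-T) = X - T = H \in T_I\G = \Lie$. As $H$ is the $\Ho$-component of $X = H + T$, this is exactly the orthogonal projection onto $\Ho$.

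For part (a) I would first record two structural facts: because $\T$ is abelian, $\Ad(e^{tT})$ fixes $\T$ pointwise, and because it is orthogonal ($K$ is $\Ad$-invariant) it preserves $\Ho = \T^{\perp}$. Reading $\sigma_X(t) = \rexp\int_0^t e^{sT}He^{-sT}\,ds$ as the flow of $\Ad(e^{sT})(H)$, the system \eqref{dyn} yields $\sigma_X'(t) = \sigma_X(t)\cdot\Ad(e^{tT})(H)$; hence $\sigma_X$ is horizontal with constant speed $\|\Ad(e^{tT})(H)\| = \|H\|$. To identify it as a geodesic I would use the normal-extremal equation for this left-invariant structure: identifying $\Lie^{*}$ with $\Lie$ through the inner product, so that skew-symmetry of $\ad X$ turns the coadjoint action into $\ad^{*}_X m = [m,X]$, the Lie--Poisson flow of the Hamiltonian $\tfrac12\|Pm\|^2$ (with $P$ the orthogonal projection onto $\Ho$) is $\dot m = [m,u]$, $u = Pm$. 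I would then check the candidate $m(t) = \Ad(e^{tT})(X)$: since $\Ad(e^{tT})$ fixes $T$, we have $m = \Ad(e^{tT})(H) + T$, so $Pm = \Ad(e^{tT})(H) = u$; and $\dot m = \Ad(e^{tT})[T,H] = [T,\Ad(e^{tT})(H)] = [m,u]$, so the equation holds and $\sigma_X$ is a normal geodesic.

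Completeness would then follow from uniqueness of solutions: a normal geodesic through $I$ is determined by $m(0) \in \Lie$, and for $m(0) = H_0 + T_0$ the curve $\sigma_{H_0+T_0}$ carries momentum $\Ad(e^{tT_0})(H_0+T_0)$ agreeing with $m(0)$ at $t = 0$, so it is that geodesic; conversely every $X \in \Lie$ produces one. For the curvature I would regard $\sigma_X$ as a curve in the ambient matrix space, extending $\langle\cdot,\cdot\rangle$ to matrices by a multiple of $\mathrm{Re}\,\tr(AB^{*})$. Differentiating once more, $\sigma_X'' = \sigma_X(u^2 + u') = \sigma_X\,\Ad(e^{tT})\big(H^2 - [H,T]\big)$, whose norm is the constant $\|H^2 - [H,T]\|$ since conjugation by the unitary $e^{tT}$ is an isometry. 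Constant speed gives $\langle\sigma_X',\sigma_X''\rangle = 0$ (the only term to check, $\langle H, H^2\rangle$, vanishes because $\tr(H^3)$ is purely imaginary), so for a constant-speed curve $k = \|\sigma_X''\|/\|\sigma_X'\|^2 = \|H^2 - [H,T]\|/\|H\|^2$.

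I expect the crux to be part (a): fixing the correct sign in the normal-extremal equation and understanding the role of the momentum's $\T$-component, which is invisible to the velocity ($Pm(0) = H_0$) yet governs the turning of the geodesic; and arguing that $\{\hexp_{\T}(tX) : X \in \Lie\}$ is exhaustive, i.e. that no strictly abnormal minimizers are overlooked. By contrast, (b) and the curvature bookkeeping are routine once the conjugation-invariance of the norm and the identity $u^2 + u' = \Ad(e^{tT})(H^2 - [H,T])$ are in place.
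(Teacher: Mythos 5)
Your proposal is correct, and on everything except the geodesic characterization it coincides with the paper's argument: the paper also computes the speed from $\sigma_X'(t)=e^{tX}\cdot H\cdot e^{-tT}$ and $\Ad$-invariance, gets the curvature by differentiating the unit tangent $V(t)=\tfrac{1}{\|H\|}e^{tX}He^{-tT}$ to produce $e^{tX}(H^2-[H,T])e^{-tT}$, and obtains (b) as $\lim_{t\to 0}\sigma_X'(t)=H$ (your product-rule computation $X-T=H$ is the same calculation in different clothing). The genuine divergence is in part (a): the paper does not prove that the $\sigma_X$ are exactly the sub-Riemannian geodesics through $I$ --- it cites \cite[Theorem 2.1]{dom17} for that --- whereas you supply a proof via the left-trivialized normal-extremal (Lie--Poisson) equations, verifying that $m(t)=\Ad(e^{tT})(X)$ satisfies $\dot m=[m,Pm]$ with $u=Pm=\Ad(e^{tT})(H)$, and matching initial momenta to get exhaustiveness. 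This buys self-containedness at the price of importing the Pontryagin machinery and of the two loose ends you yourself flag: the sign convention in $\dot m=[m,u]$ (which determines whether the extremal with momentum $\Ad(e^{tT})(X)$ is $e^{tX}e^{-tT}$ or its mirror $e^{-tT}$-conjugate family --- worth pinning down against the cited result) and the exclusion of strictly abnormal minimizers, which your argument does not address and which the citation to \cite{dom17} silently absorbs. Two small remarks on the curvature step: your explicit extension of the inner product to the ambient matrix space is actually a point the paper glosses over (since $H^2\notin\Lie$, the norm $\|H^2-[H,T]\|$ only makes sense in the ambient space), and the orthogonality $\langle\sigma_X',\sigma_X''\rangle=0$ already follows from constancy of the ambient speed $\|e^{tX}He^{-tT}\|=\|H\|$, so the separate check that $\mathrm{Re}\,\tr(H^3)=0$ is harmless but redundant.
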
 

\begin{proof} For the fact that all sub-Riemannian geodesics through the identity of $\G$ are given by $\sigma_X (t) = \hexp_{\T} (tX)$, we quote \cite[Theorem 2.1]{dom17}.

Moreover, $\sigma_X' (t) = e^{tX} \cdot H \cdot e^{-tT}$ and the $\Ad$-invariance of the inner product of $\Lie$ implies that
\begin{equation*}
\begin{split}
\left\Vert \sigma_X' (t) \right\Vert_{\sigma_X (t)} &= \left\Vert \sigma_X (t)^{-1} \cdot \sigma_X' (t) \right\Vert\\
& = \left\Vert e^{tT} \cdot H \cdot e^{-tT} \right\Vert = ||H|| \, .
\end{split}
\end{equation*} 
The derivative of the unit tangent vector
$$V(t) = \frac{1}{||H||} e^{tX} \cdot H \cdot e^{-tT}$$
is
\begin{equation*}
\begin{split}
V'(t) &= \frac{1}{||H||} \left( e^{tX} \cdot (XH - HT) \cdot e^{-tT} \right) \\
& = \frac{1}{||H||} \left( e^{tX} \cdot (H^2 - [H,T]) \cdot e^{-tT} \right) \, ,
\end{split}
\end{equation*}
and hence, the curvature of $\sigma_X$ is
$$k(t) = \left. \frac{1}{||\sigma_X ' (t) ||} \left\Vert V' (t) \right\Vert \right|_{\sigma_X (t)} = \frac{\left\Vert H^2 - [H,T] \right\Vert}{\left\Vert H \right\Vert^2} \, .$$  

(b) For the differential of $\sigma_X (t) = \hexp_{\T} (tX)$ we have the formula: 
\begin{equation*}
\left( D \hexp_{\T} \right)_0 (X)  = \lim_{t \to 0} \sigma_X ' (t) 
 = \lim_{t \to 0} e^{tX} \, H \, e^{-tT} = H.
\end{equation*}
\end{proof}

\end{document}